\title{A connection of the Brascamp-Lieb inequality with Skorokhod embedding}
\author{Yuu Hariya\thanks{Mathematical Institute, 
Tohoku University, Aoba-ku, Sendai 980-8578, Japan. }}
\date{\empty}
\numberwithin{equation}{section}
\theoremstyle{plain}
\newtheorem{thm}{Theorem}[section]
\newtheorem{prop}{Proposition}[section]
\newtheorem{lem}{Lemma}[section]
\theoremstyle{definition}
\theoremstyle{remark}
\newtheorem{rem}{Remark}[section]
\newtheorem{exm}{Example}[section]
\begin{document}

\def\N {\mathbb{N}}
\def\R {\mathbb{R}}
\def\Q {\mathbb{Q}}

\def\calF {\mathcal{F}}

\def\G {G}
\def\X {X}
\def\Y {Y}
\def\O {\Omega}
\def\V {V}
\def\U {U}
\def\tV {\widetilde{\V }}
\def\fm {F_{\mu }}
\def\tfm {\widetilde{F}_{\mu }}
\def\bfinv {\b \circ \fm ^{-1}}
\def\k {k}

\def\kp {\kappa}

\def\ind {\boldsymbol{1}}

\def\al {\alpha }
\def\la {\lambda }

\def\ga {\gamma }

\def\br {B}

\def\W {W}

\def\pdpinv {\Phi '\circ \Phi ^{-1}}
\def\fdfinv {\fm '\circ \fm ^{-1}}
\def\b {b}

\def\A {\mathsf{a}}
\def\v {v}
\def\p {\mathsf{p}}

\def\pr {P}
\def\pbes {P^{(3)}}
\def\P {\mathbb{P}}

\def\ex {E}
\def\ebes {E^{(3)}}
\def\E {\mathbb{E}}

\newcommand\ND{\newcommand}
\newcommand\RD{\renewcommand}

\ND\lref[1]{Lemma~\ref{#1}}
\ND\tref[1]{Theorem~\ref{#1}}
\ND\pref[1]{Proposition~\ref{#1}}
\ND\sref[1]{Section~\ref{#1}}
\ND\ssref[1]{Subsection~\ref{#1}}
\ND\aref[1]{Appendix~\ref{#1}}
\ND\rref[1]{Remark~\ref{#1}} 
\ND\cref[1]{Corollary~\ref{#1}}
\ND\eref[1]{Example~\ref{#1}}
\ND\fref[1]{Fig.\ {#1} }
\ND\lsref[1]{Lemmas~\ref{#1}}
\ND\tsref[1]{Theorems~\ref{#1}}
\ND\dref[1]{Definition~\ref{#1}}
\ND\psref[1]{Propositions~\ref{#1}}

\ND\var[1]{\mathrm{var}(#1)}
\ND\inner[1]{\v \cdot #1}

\def\thefootnote{{}}

\maketitle 

\begin{abstract}
We reveal a connection of the Brascamp-Lieb inequality with Skorokhod 
embedding. Error bounds for the inequality in terms of variance are also 
provided. \footnote{E-mail: hariya@math.tohoku.ac.jp}
\footnote{{\itshape Key Words and Phrases}. Brascamp-Lieb inequality; Skorokhod embedding; It\^o-Tanaka formula.}
\footnote{
2010 {\itshape Mathematical Subject Classification}. Primary 82B31; Secondary 60G40.}
\end{abstract}

\section{Introduction}\label{;intro}
The Brascamp-Lieb moment inequality plays an important role in statistical mechanics, such as in the analysis of gradient interface models; see, e.g., 
\cite{fs,dgi,gos}. 
It asserts that 
centered moments of a distribution with log-concave density relative to a 
Gaussian distribution do not exceed those of that Gaussian's; it is used to 
derive the tightness of finite-volume Gibbs measures describing the static 
interface, strict convexity of the associated surface tension, and so on. 

The Skorokhod embedding problem is to find a stopping time $T$ for 
one-dimensional Brownian motion $B$ such that $B(T)$ is distributed 
as a given probability measure on $\R $. The problem was proposed by 
Skorokhod \cite{sk} and a number of solutions have been constructed 
since then (\cite{ob}); they are applied to the proof of Donsker's invariance principle, 
robust pricings of options in mathematical finance (see, e.g., \cite{ho}), 
and so on. 

In this paper, we reveal a connection between the Brascamp-Lieb inequality 
and the Skorokhod embedding of Bass \cite{ba}; as a by-product, we also 
provide error bounds for the inequality in terms of variance by applying the 
It\^o-Tanaka formula. Let $\Y $ be an $n$-dimensional Gaussian random variable defined on a probability space $(\O ,\calF, \pr )$ with law $\nu $. 
Let $\X $ be an $n$-dimensional random variable on $(\O ,\calF ,\pr )$, whose 
law $\mu $
is given in the form 
\begin{align}\label{;density}
 \mu (dx)=\frac{1}{Z}e^{-\V (x)}\,\nu (dx)
\end{align}
with $\V $ a convex function on $\R ^{n}$ such that 
\begin{align*}
Z:=\int _{\R ^{n}}e^{-\V (x)}\,\nu (dx)<\infty . 
\end{align*}
In what follows, we fix $v\in \R ^{n}$ ($\v \neq 0$) arbitrarily. 
For a one-dimensional random variable $\xi $, we denote its variance by $\var{\xi }$: $\var{\xi }=\ex [(\xi -\ex [\xi ])^2]$. We set $\A :=\var{\v \cdot Y}$. Here $a\cdot b$ denotes the inner 
product of $a,b\in \R ^{n}$. We also set 
\begin{align*}
 \p (t;x):=\frac{1}{\sqrt{2\pi t}}\exp \left( -\frac{x^{2}}{2t}\right) , 
 \quad t>0, \ x\in \R . 
\end{align*}
The result of this paper is stated as follows. 

\begin{thm}\label{;tBL} 
For every convex function $\psi $ on $\R $, 
we have the following. 
\begin{enumerate}[(i)]{}
\item It holds that 
 \begin{align}\label{;BL1}
  \ex \left[ 
  \psi \left( \inner{Y}-\ex \left[ \inner{Y}\right] \right) 
  \right] 
  \ge 
  \ex \left[ 
  \psi \left( \inner{X}-\ex \left[ \inner{X}\right] \right) 
  \right] . 
 \end{align}
 More precisely, we have 
 \begin{align}\label{;BL2}
  \ex \left[ 
  \psi \left( \inner{Y}-\ex \left[ \inner{Y}\right] \right) 
  \right] 
  \ge 
  &\ex \left[ 
  \psi \left( \inner{X}-\ex \left[ \inner{X}\right] \right) 
  \right] \notag \\
  &+\frac{1}{2}\int _{\R }\psi ''(dx)\,
  \int _{0}^{\A ^{-1}\left( \A -\var{\inner{X}}\right) ^{2}}
  \!\!\!ds\,\p \left( s;\sqrt{x^{2}+\A }\right) , 
 \end{align}
 where $\psi ''(dx)$ denotes the second derivative of $\psi $ in the sense of 
 distribution. 

\item For every $p>1$, it holds that 
 \begin{align}\label{;BL3}
  \ex \left[ 
  \psi \left( \inner{Y}-\ex \left[ \inner{Y}\right] \right) 
  \right] 
  \le  
  &\ex \left[ 
  \psi \left( \inner{X}-\ex \left[ \inner{X}\right] \right) 
  \right] \notag \\
  &+C(\A ,\psi ,q)\left( \A -\var{\inner{X}}\right) ^{\frac{1}{2p}}. 
 \end{align}
 Here $C(\A ,\psi ,q)\in [0,\infty ]$ is given by 
 \begin{align*}
  C(\A ,\psi ,q)=\left( 
  \A (1+q)
  \right) ^{\frac{1}{2q}}\int _{\R }\psi ''(dx)\,
  \p \biggl( 1;\frac{x}{\sqrt{\A (1+q)}}\biggr) 
 \end{align*}
 with $q$ the conjugate of $p$: $p^{-1}+q^{-1}=1$. Note that 
 $\A -\var{\inner{X}}\ge 0$ by \eqref{;BL1}. 
\end{enumerate}
The above inequalities \eqref{;BL1}--\eqref{;BL3} are understood 
to hold also in the case that both sides of them are infinity. 
\end{thm}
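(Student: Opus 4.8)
The plan is to reduce everything to dimension one, to realise $\inner{Y}-\ex[\inner{Y}]$ and $\inner{X}-\ex[\inner{X}]$ as $\W_{\A}$ and $\W_{T}$ on a single Brownian path $\W$ via Bass's solution of the Skorokhod problem, and to read off both inequalities from the It\^o--Tanaka formula on $[T,\A]$. First, assuming $\A>0$ (if $\A=0$ then $\inner{X}$ and $\inner{Y}$ are a.s.\ the same constant and everything is trivial), an affine change of variables lets me take $\nu=\gamma_{n}$ standard Gaussian and $\inner{Y}=\sqrt{\A}\,Y_{1}$ with $Y_{1},\dots,Y_{n}$ i.i.d.\ $N(0,1)$; then $\V$ stays convex and $\inner{X}-\ex[\inner{X}]=\sqrt{\A}\,(X_{1}-\ex[X_{1}])$. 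By Pr\'ekopa's theorem the marginal law of $X_{1}$ has density proportional to $e^{-\varphi(x)}\p(1;x)$ with $\varphi$ convex, so the law $\rho$ of $\sqrt{\A}\,(X_{1}-\ex[X_{1}])$ is centred and of the form $\rho(dx)\propto e^{-\varphi_{\A}(x)}\p(\A;x)\,dx$ with $\varphi_{\A}$ convex, and its variance equals $\var{\inner{X}}$. It suffices to compare $\ex[\psi(G)]$ with $\ex[\psi(\xi)]$ for $G\sim N(0,\A)$ and $\xi\sim\rho$.

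Next I build Bass's coupling. Let $b:=F_{\rho}^{-1}\circ\Phi_{\A}$, the monotone transport map of $N(0,\A)$ onto $\rho$ (strictly increasing, both measures having positive densities), and on a Brownian motion $\beta$ with $\beta_{0}=0$ set $M_{t}:=\ex[b(\beta_{\A})\mid\calF_{t}]=b_{t}(\beta_{t})$ for $t\in[0,\A]$, where $b_{t}$ solves the backward heat equation with terminal datum $b_{\A}=b$. Then $M$ is a martingale with $M_{0}=\ex[b(\beta_{\A})]=\ex[\xi]=0$ and $M_{\A}=b(\beta_{\A})\sim\rho$; writing $M$ as a time-changed Brownian motion $\W$ (so $\W_{0}=0$) produces a stopping time $T=\langle M\rangle_{\A}=\int_{0}^{\A}\bigl(b_{s}'(\beta_{s})\bigr)^{2}\,ds$ of $\W$ with $\W_{T}\sim\rho$ and $\ex[T]=\var{\rho}=\var{\inner{X}}$, while $\W_{\A}\sim N(0,\A)$. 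The key structural point is that $\rho$ is a log-concave perturbation of $N(0,\A)$, so Caffarelli's contraction theorem — or a direct argument from $b'(y)=\p(\A;y)/f_{\rho}(b(y))$ and convexity of $\varphi_{\A}$ — yields $0\le b'\le1$; since $b_{t}'$ is the heat evolution of $b'$, also $0\le b_{t}'\le1$, whence $T\le\A$ almost surely.

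Because $T\le\A$, conditional Jensen gives $\ex[\psi(\W_{\A})\mid\calF_{T}]\ge\psi(\ex[\W_{\A}\mid\calF_{T}])=\psi(\W_{T})$, which is \eqref{;BL1}. For the refinement, apply It\^o--Tanaka to $\psi$ on $[T,\A]$ and take expectations — first for $\psi$ with bounded $\psi',\psi''$, then for general convex $\psi$ by monotone convergence (this also handles the case of infinite values); the local-martingale part has zero mean and
\[
 \ex[\psi(\W_{\A})]-\ex[\psi(\W_{T})]=\tfrac12\int_{\R}\psi''(dx)\,\ex\bigl[L_{\A}^{x}-L_{T}^{x}\bigr],
\]
$L^{x}$ the local time of $\W$ at $x$; and by the strong Markov property at $T$, $\ex[L_{\A}^{x}-L_{T}^{x}\mid\calF_{T}]=\int_{0}^{\A-T}\p(s;x-\W_{T})\,ds$. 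Part (ii) then follows from the same identity by H\"older's inequality, splitting the $s$-integral and the expectation and using $\ex[\A-T]=\A-\var{\inner{X}}$ together with $\A-T\le\A$ (after rescaling $s\mapsto(\A-T)s$ inside $\int_{0}^{\A-T}\p(s;\cdot)\,ds$), which yields a bound of the stated form $C(\A,\psi,q)(\A-\var{\inner{X}})^{1/2p}$.

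The main obstacle is the lower bound needed for \eqref{;BL2}: one must show
\[
 \ex\Bigl[\int_{0}^{\A-T}\p(s;x-\W_{T})\,ds\Bigr]\ \ge\ \int_{0}^{\A^{-1}(\A-\var{\inner{X}})^{2}}\!\p\bigl(s;\sqrt{x^{2}+\A}\bigr)\,ds
\]
for all $x$. The structure available is that $\ex[\A-T]=\A-\var{\inner{X}}$ and that the law of $\W_{T}$ is dominated in convex order by $N(0,\A)$ (since $\W_{T}=\ex[\W_{\A}\mid\calF_{T}]$); I expect to use these via Jensen in $\W_{T}$ and a comparison of $G(y,h):=\int_{0}^{h}\p(s;y)\,ds$ against its value along the extremal stopping time, i.e.\ by solving the variational problem of maximising $\ex|\W_{T}-x|$ over stopping times $T\le\A$ with $\ex[T]$ prescribed — the extremal embedding should have local-time profile exactly $\p(s;\sqrt{x^{2}+\A})$. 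A secondary point needing care, for a self-contained proof, is establishing $T\le\A$ without quoting Caffarelli's theorem.
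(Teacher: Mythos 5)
Your overall architecture coincides with the paper's: reduce to dimension one, realise the two variables as $\br (T)$ and $\br (\A )$ through Bass's embedding built from the monotone transport map, get $T\le \A $ from a Lipschitz bound on that map, and read the error term off the It\^o--Tanaka/Tanaka identity $\ex [\psi (\br (\A ))]-\ex [\psi (\br (T))]=\frac12\int \psi ''(dx)\,\ex [L^{x}_{\A }-L^{x}_{T}]$. Up to and including \eqref{;BL1} your argument is sound (the paper itself remarks that \eqref{;BL1} follows from optional sampling once $T\le \A $ is known), with the caveat that the contraction property of the transport map is not the easy consequence of ``$b'(y)=\p (\A ;y)/f_{\rho }(b(y))$ and convexity'' that you suggest: the paper devotes Subsection~\ref{;sspr3} (Lemmas~\ref{;fd} and \ref{;nd} plus a local-minimum analysis of $\G $) to proving exactly this, and otherwise one must genuinely quote Caffarelli's contraction theorem, which is legitimate but not the ``direct argument'' you sketch.

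The genuine gap is that the quantitative content of the theorem is not established. For \eqref{;BL2} you explicitly leave the lower bound on $\ex [L^{x}_{\A }-L^{x}_{T}]$ as an ``obstacle'' and only propose a variational problem over stopping times that you ``expect'' to solve; this is precisely Proposition~\ref{;estloc}\thetag{1}, and the paper's route is short but not guessable from your sketch: write, via the strong Markov property and the identity $\ex [L^{y}_{t}]=2\int _{0}^{\infty }(\sqrt{t}\,u-|y|)^{+}\p (1;u)\,du$, the quantity $\ex [L^{x}_{\A }-L^{x}_{T}]$ as $2\ex [\int _{0}^{\infty }(\sqrt{\A -T}\,u-|x-\br (T)|)^{+}\p (1;u)\,du]$, apply Jensen to the jointly convex map $(s,w)\mapsto (su-w)^{+}$, bound $\ex [|x-\br (T)|]\le \ex [|x-\br (\A )|]\le \sqrt{x^{2}+\A }$ by optional sampling and Schwarz, and use $\ex [\sqrt{\A -T}]\ge \A ^{-1/2}(\A -\ex [T])$ together with Wald's identity $\ex [T]=\var{X}$. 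Likewise for \eqref{;BL3}: your ``H\"older plus $\ex [\A -T]=\A -\var{\inner{X}}$'' outline can only give \emph{some} bound of order $(\A -\var{\inner{X}})^{1/2p}$, whereas the stated constant $C(\A ,\psi ,q)$ requires the specific estimate \eqref{;prp3}, namely $\ex [\int _{0}^{t}\p (s;|x-\br (T)|)\,ds]\le \int _{0}^{\A +t}\p (s;x)\,ds$, proved in the paper by optional sampling at $T$ followed by a Gaussian convolution and change of variables; nothing in your sketch supplies the dependence on $x$ through $\p (1;x/\sqrt{\A (1+q)})$. So parts \eqref{;BL2} and \eqref{;BL3} remain unproven as written.
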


\begin{rem}
\thetag{1}~The inequality \eqref{;BL1} is called the Brascamp-Lieb 
inequality. It was originally proved by Brascamp and Lieb 
\cite[Theorem~5.1]{bl} in the case $\psi (x)=|x|^{p},\,p\ge 1$; it was then  
extended to general convex $\psi $'s by Caffarelli 
\cite[Corollary~6]{ca} based on a deep understanding of 
optimal transportation between $\mu $ and $\nu $, and the related 
Monge-Amp\'ere equation. \\
\thetag{2}~In the case $\psi ''(\R )<\infty $, letting $p\to 1$ 
in \eqref{;BL3} yields 
\begin{align*}
 \ex \left[ 
  \psi \left( \inner{Y}-\ex \left[ \inner{Y}\right] \right) 
  \right] 
  -
  \ex \left[ 
  \psi \left( \inner{X}-\ex \left[ \inner{X}\right] \right) 
  \right] \le \frac{1}{\sqrt{2\pi }}\psi ''(\R )
  \left( \A -\var{\inner{X}}\right) ^{\frac{1}{2}}. 
\end{align*}
Taking $\psi (x)=|x|$ and some manipulation show that 
\begin{align*}
 \frac{\ex \left[ \left| \inner{X}-\ex \left[ \inner{X}\right] \right| \right] }{\var{\inner{X}}}\ge \frac{1}{\sqrt{2\pi \var{\inner{Y}}}}
\end{align*}
for any convex $\V $. 
\end{rem}
\smallskip 

The rest of the paper is organized as follows: 
In \sref{;prtBL} we prove \tref{;tBL}. 
The Brascamp-Lieb inequality \eqref{;BL1} is proved in \ssref{;sspr1}; 
we devote \ssref{;sspr2} to the proof of \eqref{;BL2} and \eqref{;BL3}; 
in \ssref{;sspr3} we prove \lref{;gd}, which plays an essential role 
in the proof of \tref{;tBL}. In the appendix we discuss an 
extension of the Brascamp-Lieb inequality to the case with $\V $ 
not necessarily convex. 
\smallskip 

For every function $f$ on $\R $ and $x\in \R $, we denote respectively by 
$f'_{+}(x)$ and $f'_{-}(x)$ the right- and left-derivatives of $f$ at $x$ if 
they exist. For each $x,y\in \R $, we write $x\wedge y=\min \{ x,y\} $ and 
$x^{+}=\max \{ x,0\} $. Other notation will be introduced as 
needed. 

\section{Proof of \tref{;tBL}}\label{;prtBL}
In this section we give a proof of \tref{;tBL}. Without loss of 
generality, we may assume that $\nu $ is centered: $\ex [Y]=0$. 
Moreover, Theorem~4.3 of \cite{bl} reduces the proof to the case $n=1$; 
that is, the density of the law $\pr \circ \left( v\cdot \X \right) ^{-1}$ 
relative to the one-dimensional Gaussian measure 
$\pr \circ \left( v\cdot \Y \right) ^{-1}$ is log-concave. 
Therefore in what follows, we take the Gaussian 
measure $\nu $ in \eqref{;density} as 
\begin{align*}
 \nu (dx)=\frac{1}{\sqrt{2\pi \A }}\exp \left( 
 -\frac{x^2}{2\A } 
 \right) dx, \quad x\in \R , 
\end{align*}
and $V$ as a convex function on $\R $.  We accordingly write 
$X$ and $Y$ for $\inner{X}$ and $\inner{Y}$, respectively; 
that is, $X$ is distributed as $\mu $ and $Y$ as $\nu $. 

\subsection{Proof of \eqref{;BL1}}\label{;sspr1}
In this subsection we prove the inequality \eqref{;BL1} in 
\tref{;tBL}. We denote by $\fm $ the distribution function of $\mu $: 
\begin{align*}
 \fm (x):=\frac{1}{Z}\int _{-\infty }^{x}e^{-\V (y)}\,\nu (dy), 
 \quad x\in \R . 
\end{align*}
We also set 
\begin{align*}
 \Phi (x):=\frac{1}{\sqrt{2\pi }}\int _{-\infty }^{x}\exp \left( 
 -\frac{1}{2}y^2
 \right) dy, \quad x\in \R , 
\end{align*}
and 
\begin{align}\label{;defg}
 g:=\fm ^{-1}\circ \Phi . 
\end{align}
Here $\fm ^{-1}:(0,1)\to \R $ is the inverse function of $\fm $. 
Apparently $g$ is strictly increasing. By convexity of $V$ we have moreover 
\begin{lem}\label{;gd}
It holds that $g'(x)\le \sqrt{\A }$ for all $x\in \R $. 
\end{lem}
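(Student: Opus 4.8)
The plan is to prove the pointwise bound $g'(x)\le\sqrt{\A}$ by differentiating the defining relation \eqref{;defg}. Since $g=\fm^{-1}\circ\Phi$, we have $\fm(g(x))=\Phi(x)$; differentiating and using that $\fm'(y)=Z^{-1}e^{-\V(y)}\,\tfrac{1}{\sqrt{2\pi\A}}e^{-y^{2}/(2\A)}$ and $\Phi'(x)=\tfrac{1}{\sqrt{2\pi}}e^{-x^{2}/2}$ gives
\begin{align*}
 g'(x)=\frac{\Phi'(x)}{\fm'(g(x))}
 =\sqrt{\A}\,Z\,\exp\!\left(\V(g(x))-\frac{x^{2}}{2}+\frac{g(x)^{2}}{2\A}\right).
\end{align*}
So the claim is equivalent to showing that the exponent is $\le -\log Z$ for every $x$, i.e. that the function $h(x):=\V(g(x))-\tfrac{x^{2}}{2}+\tfrac{g(x)^{2}}{2\A}+\log Z$ is nonpositive on $\R$.

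First I would record the behaviour of $h$ at the endpoints. As $x\to\pm\infty$ one expects $h(x)\to-\infty$: heuristically $g(x)$ grows like $\sqrt{\A}\,x$ to leading order (both $\mu$ and $\nu$ have Gaussian-type tails, with $\mu$'s tail no heavier because $\V\ge$ const), so $\tfrac{g(x)^{2}}{2\A}-\tfrac{x^{2}}{2}$ stays controlled while $-\tfrac{x^{2}}{2}$ or the growth of $\V$ drives $h$ down; making this rigorous will need the asymptotics of $\fm^{-1}$ near $0$ and $1$, comparing $\fm$ with $\nu$'s distribution function via $e^{-\V}\le$ (suitable bound). Alternatively, and more robustly, I would avoid delicate tail estimates by working with the derivative of $h$ directly and looking for interior maxima.

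The cleaner route: compute $h'(x)=\V'(g(x))g'(x)-x+\tfrac{g(x)g'(x)}{\A}$ (using one-sided derivatives of $\V$ where needed, which is fine since $\V$ is convex hence locally Lipschitz). At a point $x_{0}$ where $h$ attains an interior maximum we get $h'(x_{0})=0$, i.e. $g'(x_{0})\big(\V'(g(x_{0}))+\tfrac{g(x_{0})}{\A}\big)=x_{0}$. Now differentiate $g'(x)=\sqrt{\A}\,Z\,e^{h(x)-\log Z}=g'(x)$ — more usefully, note $\log g'(x)=\tfrac12\log\A+\log Z+h(x)-\log Z$, hmm; rather, from $g' = \sqrt\A\,Z\,e^{V(g)-x^2/2+g^2/(2\A)}$ we get $(\log g')'=\V'(g)g'-x+g g'/\A=h'$. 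Hence $g'$ is increasing exactly where $h$ is, and $h$ has a critical point at $x_{0}$ iff $(\log g')'(x_{0})=0$. The key structural fact is convexity of $\V$: I would show that $h$ can have no interior local maximum, or that any critical point is a minimum, by examining the second derivative / monotonicity of $h'$, using that $y\mapsto \V'(y)+y/\A$ is nondecreasing. This forces $\sup_{\R}h$ to be attained in the limit at $\pm\infty$, reducing matters back to the (now unavoidable) endpoint computation, which I would carry out by the tail comparison sketched above, showing $\limsup_{x\to\pm\infty}h(x)\le 0$.

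The main obstacle I anticipate is the endpoint analysis: controlling $\V(g(x))+\tfrac{g(x)^{2}}{2\A}-\tfrac{x^{2}}{2}$ as $x\to\pm\infty$ requires knowing how fast $g(x)=\fm^{-1}(\Phi(x))$ grows, and $\V$ — being merely convex with $\int e^{-\V}\,d\nu<\infty$ — could behave mildly (e.g. bounded below but slowly growing) or steeply, so I would need a comparison argument: bound $\V$ below by a constant $c_{0}$ to get $Z e^{-c_{0}}\ge \nu(\R)=1$ type inequalities relating $\fm$ to the Gaussian tail, and use that $g$ is $1$-Lipschitz-composed-with-nothing-worse to pin down $\limsup h$. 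If the monotonicity argument in the previous paragraph actually shows $h' \le 0$ for $x\ge$ some point and $h'\ge 0$ for $x\le$ some point, the supremum of $h$ is at an interior point and one instead evaluates there using the critical-point relation together with convexity of $\V$ — this is likely the intended slick argument, turning the whole proof into a short convexity manipulation and sidestepping the tails entirely.
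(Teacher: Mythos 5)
Your reduction is correct: writing $g'(x)=\Phi'(x)/\fm'(g(x))$ and exponentiating, the claim is equivalent to $h(x):=\V(g(x))+\frac{g(x)^2}{2\A}-\frac{x^2}{2}+\log Z\le 0$ on $\R$, and the identity $(\log g')'=h'$ is also right. But after this point the proposal never actually proves anything: both of your routes are left as plans, and the one structural claim you lean on is unsubstantiated and, as stated, points the wrong way. Convexity of $\V$ does \emph{not} rule out interior local maxima of $h$, nor does it make critical points minima; the classical one-dimensional maximum-principle argument (Caffarelli-type) in fact \emph{uses} an interior maximum of $\log g'$: at such a point $x_0$ one has $h'(x_0)=0$ and $h''(x_0)\le 0$, and since $g''=g'h'$ vanishes there, $h''(x_0)=\bigl(\V''(g(x_0))+\A^{-1}\bigr)g'(x_0)^2-1\le 0$, giving $g'(x_0)^2\le\A$ and hence the bound everywhere. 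To make even that rigorous you would need (a) $\V$ twice differentiable at the relevant point --- the lemma only assumes $\V$ convex, so $\V'$ may have jumps and $\V''$ may fail to exist exactly where the maximum sits, forcing a smoothing/approximation step you do not discuss --- and (b) a proof that $\sup_\R h$ is attained, or else the endpoint estimate $\limsup_{x\to\pm\infty}h(x)\le 0$, which you explicitly concede you do not have (controlling $\V(g(x))+g(x)^2/(2\A)-x^2/2$ requires tail asymptotics of $\fm^{-1}$ that are not obvious for a general convex $\V$ with $\int e^{-\V}d\nu<\infty$). So there is a genuine gap: no step in the proposal establishes $h\le 0$.

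For comparison, the paper sidesteps both difficulties. It passes to the variable $\xi\in(0,1)$ and proves $\G(\xi):=\fm'\circ\fm^{-1}(\xi)-\Phi'\circ\Phi^{-1}(\xi)\ge 0$ (after scaling to $\A=1$). The boundary behaviour near $\xi=0$ and $\xi=1$ is controlled not by tail asymptotics of $g$ but by the elementary convexity estimates of Lemma~\ref{;fd}, $\fm'(x)\ge\Phi'\bigl(x+\V'_{-}(x)\bigr)$ and $\fm(x)\le \b(x)\,\Phi\bigl(x+\V'_{-}(x)\bigr)$, combined with the concavity, symmetry and monotonicity of $\Phi'\circ\Phi^{-1}$ (Lemma~\ref{;nd}); interior points are handled by looking at a local \emph{minimum} of $\G$ and using only the one-sided first derivatives $\G'_{\pm}(\xi)=-\bigl(x+\V'_{\pm}(x)\bigr)\big|_{x=\fm^{-1}(\xi)}+\Phi^{-1}(\xi)$, which forces $\Phi^{-1}(\xi_0)=\bigl(x+\V'_{-}(x)\bigr)\big|_{x=\fm^{-1}(\xi_0)}$ and then again Lemma~\ref{;fd} gives $\G(\xi_0)\ge 0$. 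In particular the paper never needs second derivatives of $\V$ nor attainment of a supremum at infinity, which are precisely the two points where your sketch is incomplete.
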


We postpone the proof of this lemma to \ssref{;sspr3}. 
Once this lemma is shown, the proof of \eqref{;BL1} is straightforward from 
the Skorokhod embedding of Bass \cite{ba}; for other types of embeddings, 
we refer the reader to the detailed survey \cite{ob} by Ob\l \'oj. Let 
$\{ \W _{t}\} _{t\ge 0}$ be a standard one-dimensional Brownian motion 
on $(\O ,\calF ,\pr )$. 
\begin{proof}[Proof of \eqref{;BL1}]
Note that $g(\W _{1})$ is distributed as $\mu $. Applying 
Clark's formula to $g(\W _{1})$ yields 
\begin{align*}
 g(\W _{1})-\ex \left[ g(\W _{1})\right] 
 =\int _{0}^{1}a(s,\W _{s})\,d\W _{s} \quad \text{$\pr $-a.s.}, 
\end{align*}
where for $0\le s\le 1$ and $y\in \R $, 
\begin{align}\label{;arepr}
 a(s,y):=&\frac{\partial }{\partial y}\ex \left[ g(y+\W _{1-s})
 \right] \notag \\
 =&\ex \left[ 
 g'(y+\W _{1-s})
 \right] . 
\end{align}
By the Dambis-Dubins-Schwarz theorem (see, e.g., \cite[Theorem~V.1.6]{ry}), 
there exists a Brownian motion $\{ \br (t)\} _{t\ge 0}$ 
on $(\O ,\calF ,\pr )$ such that 
\begin{align*}
 \int _{0}^{t}a(s,\W _{s})\,d\W _{s}=\br \left( 
 \int _{0}^{t}a(s,\W _{s})^2\,ds
 \right) \quad \text{for all $0\le t\le 1$, $\pr $-a.s.}
\end{align*}
We know from \cite{ba} that $T:=\int _{0}^{1}a(s,\W _{s})^2\,ds$ is 
a stopping time in the natural filtration of $\br $. Moreover, 
by \eqref{;arepr} and \lref{;gd}, we have 
$
 T\le \A 
$ 
$\pr $-a.s. We denote by $\{ L^{x}_{t}\} _{t\ge 0,x\in \R }$ 
the local time process of $B$. For every $x\in \R $, Tanaka's formula yields 
\begin{align} 
 \ex \left[ 
 \left( \br (\A )-x\right) ^{+}
 \right] 
 &=\ex \left[ 
 \left( \br (T)-x\right) ^{+}
 \right] +\frac{1}{2}\ex \left[ L^{x}_{\A }-L^{x}_{T}\right] , \label{;tf1}\\
 \ex \left[ \left( x-\br (\A )\right) ^{+}
 \right] 
 &=\ex \left[ 
 \left( x-\br (T)\right) ^{+}
 \right] +\frac{1}{2}\ex \left[ L^{x}_{\A }-L^{x}_{T}\right] . \label{;tf2}
\end{align}
From \eqref{;tf1} and \eqref{;tf2}, it follows that for every 
convex $\psi $, 
\begin{align}\label{;tf3}
 \ex \left[ 
 \psi \left( \br (\A )\right) 
 \right] 
 =\ex \left[ 
 \psi \left( \br (T)\right) 
 \right] +\frac{1}{2}\int _{\R }\psi ''(dx)\,
 \ex \left[ L^{x}_{\A }-L^{x}_{T}\right] . 
\end{align}
Indeed, by Fubini's theorem, 
\begin{align*}
 &\int _{[0,\infty )}\psi ''(dx)\,\ex \left[ 
 \left( \br (\A )-x\right) ^{+}
 \right] 
 +\int _{(-\infty ,0)}\psi ''(dx)\,
 \ex \left[ 
 \left( x-\br (\A )\right) ^{+}
 \right] \\
 &=\ex \left[ 
 \psi \left( \br (\A )\right) -\psi '_{-}(0)\br (\A )-\psi (0)
 \right] \\
 &=\ex \left[ \psi \left( \br (\A )\right)\right] -\psi (0), 
\end{align*}
which is equal, by \eqref{;tf1}, \eqref{;tf2} and 
$\ex \left[ \br (T)\right] =0$, to the right-hand side of 
\eqref{;tf3} with $\psi (0)$ subtracted. Hence \eqref{;tf3} holds. 
As $T\le \A $ a.s.\ and $\psi ''\ge 0$, it is immediate from 
\eqref{;tf3} that 
\begin{align}\label{;BLd}
 \ex \left[ \psi \left( \br (\A )\right) \right] 
 \ge \ex \left[ \psi \left( \br (T)\right) \right] , 
\end{align}
which is nothing but \eqref{;BL1} since 
\begin{align}\label{;inlaw}
 \br (T)=g(\W _{1})-\ex \left[ g(\W _{1})\right] 
 \stackrel{(d)}{=}X-\ex \left[ X\right] 
\end{align}
and $\br (\A )\stackrel{(d)}{=}Y$. The proof is complete. 
\end{proof}

\begin{rem}
\thetag{1}~For any convex $\psi $ such that 
$\int _{0}^{\cdot }\psi '_{-}\left( \br (s)\right) dB(s)$ is 
a martingale, the identity \eqref{;tf3} is immediate from 
the It\^o-Tanaka formula. \\
\thetag{2}~For any convex $\psi $ such that 
$\ex \left[ \left| \psi \left( \br (\A )\right) \right| \right] <\infty $ 
(i.e., $\ex \left[ \psi \left( \br (\A )\right) \right] 
<\infty $), the inequality \eqref{;BLd} follows readily from 
the optional sampling theorem applied to the submartingale 
$\left\{ \psi (\br (t))\right\} _{0\le t\le \A }$. 
\end{rem}

\subsection{Proof of \eqref{;BL2} and \eqref{;BL3}}\label{;sspr2}
In this subsection we prove the inequalities \eqref{;BL2} and 
\eqref{;BL3} in \tref{;tBL}. We keep the notation in the previous 
subsection. By \eqref{;tf3}, the proof is reduced to showing the 
following proposition. 

\begin{prop}\label{;estloc}
\thetag{1}~It holds that 
\begin{align}\label{;est1}
 \ex \left[ 
 L^{x}_{\A }-L^{x}_{T}
 \right] \ge 
 \int _{0}^{\A ^{-1}\left( \A -\var{X}\right) ^{2}}
  \!\!ds\,\p \left( s;\sqrt{x^{2}+\A }\right) 
\end{align}
for all $x\in \R $. \\
\thetag{2}~For every $p>1$, it holds that 
\begin{align}\label{;est2}
 \ex \left[ 
 L^{x}_{\A }-L^{x}_{T}
 \right] 
 \le 
 2\left( \A (1+q)\right) ^{\frac{1}{2q}}
 \p \biggl( 1;\frac{x}{\sqrt{\A (1+q)}}\biggr) 
  \left( \A -\var{X}\right) ^{\frac{1}{2p}}
\end{align}
for all $x\in \R $. 
\end{prop}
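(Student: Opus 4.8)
The plan is to push everything through the Bass stopping time $T$. Recall $T\le\A$ a.s., and $B(T)\stackrel{(d)}{=}X-\ex[X]$ by \eqref{;inlaw}; since $T\le\A$, optional sampling applied to $B$ and to $B^{2}-t$ (both fine because $T\le\A$) gives $\ex[B(T)]=0$ and $\ex[T]=\ex[B(T)^{2}]=\var{X}$, so in particular $\var{X}\le\A$ and $\ex[\A-T]=\A-\var{X}$. Writing $\Lambda(t,y):=\ex[L^{y}_{t}]=\int_{0}^{t}\p(s;y)\,ds$ for standard Brownian motion (by Tanaka's formula and the heat equation), and conditioning on $\calF_{T}$, the strong Markov property identifies $L^{x}_{\A}-L^{x}_{T}$, given $\calF_{T}$, with the local time at level $x-B(T)$ of a Brownian motion run for time $\A-T$; hence
\begin{align*}
 \ex\bigl[L^{x}_{\A}-L^{x}_{T}\bigr]=\ex\bigl[\Lambda(\A-T,\,x-B(T))\bigr].
\end{align*}
Thus \thetag{1} and \thetag{2} are a lower and an upper bound for the right-hand side.

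For \thetag{1}, I would first prove the elementary pointwise inequality
\begin{align*}
 \Lambda(t,y)\ \ge\ \int_{0}^{t^{2}/\A}\p\bigl(s;\sqrt{t+y^{2}}\bigr)\,ds\qquad(0\le t\le\A,\ y\in\R),
\end{align*}
by substituting $s\mapsto(t/\A)s$ in the right-hand integral: the new integrand is dominated termwise by that of $\Lambda(t,y)$ because $t\le\A$ forces both $\sqrt{t/\A}\le1$ and $(t+y^{2})\A\ge ty^{2}$. Applying this with $t=\A-T$, $y=x-B(T)$, setting $H(u,w):=\int_{0}^{u^{2}/\A}\p(s;\sqrt{w})\,ds$ and $W:=(\A-T)+(x-B(T))^{2}$, we get $\ex[L^{x}_{\A}-L^{x}_{T}]\ge\ex[H(\A-T,\,W)]$. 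The crux is that $H$ is convex on the convex region $\{(u,w):0\le u\le\A,\ w\ge u\}$: after the substitution $s=(u^{2}/\A)r$, $H(u,w)$ is the average over $r\in(0,1)$ of $\tfrac{u}{\sqrt\A}(2\pi r)^{-1/2}\exp\bigl(-\A w/(2ru^{2})\bigr)$, and a Hessian computation shows each of these functions has nonnegative Hessian exactly where $\A w\ge ru^{2}$ — which holds on the above region, since $w\ge u$ and $u\le\A$ give $\A w\ge\A u\ge u^{2}$. Now $(\A-T,W)$ takes values in that region a.s.\ (as $0\le T\le\A$ and $W\ge\A-T$), and so does its mean $(\A-\var{X},\,x^{2}+\A)$, where one uses $\ex[W]=(\A-\var{X})+(x^{2}+\var{X})=x^{2}+\A$. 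Jensen's inequality then gives $\ex[H(\A-T,W)]\ge H(\A-\var{X},\,x^{2}+\A)$, which is precisely the right-hand side of \eqref{;est1}.

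For \thetag{2}, I would run the dual argument: a pointwise upper bound on $\Lambda(t,y)$, valid for $0\le t\le\A$, isolating a factor $t^{1/(2p)}$ together with a Gaussian factor in $y$, so that after taking expectations $t^{1/(2p)}$ turns into $(\A-\var{X})^{1/(2p)}$ via $\ex[\A-T]=\A-\var{X}$ and concavity of $z\mapsto z^{1/(2p)}$. Concretely I would write $\ex[L^{x}_{\A}-L^{x}_{T}]=\int_{0}^{\A}\ex\bigl[\ind_{\{s<\A-T\}}\,\p(s;x-B(T))\bigr]\,ds$, apply H\"older with exponents $p$ and $q$ inside each integrand, bound $\pr(T<\A-s)^{1/p}$ by $\bigl((\A-\var{X})/s\bigr)^{1/(2p)}$ (Chebyshev plus concavity of the square root), and estimate $\ex[\p(s;x-B(T))^{q}]$ using \lref{;gd}: since $g=\fm^{-1}\circ\Phi$ is $\sqrt\A$-Lipschitz, $B(T)=g(\W_{1})-\ex[g(\W_{1})]$ is a $\sqrt\A$-Lipschitz image of a standard Gaussian (centered), which yields the quantitative Gaussian comparison producing $\p(1;x/\sqrt{\A(1+q)})$; integrating in $s$ over $[0,\A]$ assembles the constant $2\bigl(\A(1+q)\bigr)^{1/(2q)}\p\bigl(1;x/\sqrt{\A(1+q)}\bigr)$.

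The main obstacle in \thetag{1} is verifying the convexity of $H$ on $\{w\ge u,\ 0\le u\le\A\}$; the pointwise inequality above was engineered precisely so that the relevant arguments fall in this range and so that the linearization lands on $\sqrt{x^{2}+\A}$ rather than on $\sqrt{x^{2}+\var{X}}$. In \thetag{2} the delicate point is retaining the Gaussian decay in $x$ instead of crudely bounding $\p(s;x-B(T))\le(2\pi s)^{-1/2}$, and this is exactly where the Lipschitz control on $g$ from \lref{;gd} is needed.
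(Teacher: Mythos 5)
Your part \thetag{1} is correct, and it takes a genuinely different route from the paper. The paper represents $\ex[L^{x}_{\A}-L^{x}_{T}]$ through \eqref{;loc3}, applies one--dimensional Jensen inside the positive part, bounds $\ex[|x-\br(T)|]\le\ex[|x-\br(\A)|]\le\sqrt{x^{2}+\A}$ by optional sampling plus Schwarz, and finishes with $\sqrt{\A-t}\ge\A^{-1/2}(\A-t)$. You instead prove the pointwise bound $\int_{0}^{t}\p(s;y)\,ds\ge\int_{0}^{t^{2}/\A}\p\bigl(s;\sqrt{t+y^{2}}\bigr)ds$ for $0\le t\le\A$ (correct: after the substitution $s\mapsto(t/\A)s$ one only needs $\sqrt{t/\A}\le1$ and $\A(t+y^{2})\ge ty^{2}$) and then apply a two--dimensional Jensen step to $H(u,w)=\int_{0}^{u^{2}/\A}\p(s;\sqrt{w})\,ds$. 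I checked your Hessian claim: writing $h_{r}(u,w)=u\A^{-1/2}\exp\bigl(-\A w/(2ru^{2})\bigr)$ and $\theta=\A w/(ru^{2})$, the Hessian is positive semidefinite exactly when $\theta\ge1$, which holds on $\{0\le u\le\A,\ w\ge u\}$, and the mean $(\A-\var{X},\,x^{2}+\A)$ of $(\A-T,W)$ lies in that region since $\ex[T]=\var{X}\le\A$ by Wald. So Jensen gives exactly \eqref{;est1}. Both arguments use only $T\le\A$, Wald's identity and $\ex[\br(T)]=0$; yours trades the paper's elementary chain for a convexity verification.

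Part \thetag{2}, however, has a genuine gap, and it is quantitative rather than cosmetic. Your chain applies H\"older separately at each fixed $s$ and then Chebyshev, bounding $\ex[L^{x}_{\A}-L^{x}_{T}]$ by $\int_{0}^{\A}\bigl((\A-\var{X})/s\bigr)^{\frac{1}{2p}}\ex\bigl[\p(s;x-\br(T))^{q}\bigr]^{\frac{1}{q}}\,ds$, and every subsequent step can only increase this quantity. Test it on $\V(x)=cx^{2}$ with $c$ large and $x=0$: then $\var{X}\to0$ and $\br(T)\to0$ in $L^{2}$, so $\ex[\p(s;\br(T))^{q}]^{1/q}\to(2\pi s)^{-1/2}$ and your bound tends to $(2\pi)^{-1/2}\,2q\,\A^{\frac{1}{2q}}(\A-\var{X})^{\frac{1}{2p}}$, whereas the right-hand side of \eqref{;est2} at $x=0$ equals $(2\pi)^{-1/2}\,2(1+q)^{\frac{1}{2q}}\A^{\frac{1}{2q}}(\A-\var{X})^{\frac{1}{2p}}$; since $q>(1+q)^{1/(2q)}$ already for $q=2$, the chain cannot reach the stated constant for, e.g., $p=q=2$. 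Independently, the ``quantitative Gaussian comparison'' you want to extract from \lref{;gd} is not established: the natural statement, that $\ex\bigl[\exp\bigl(-q(x-\br(T))^{2}/(2s)\bigr)\bigr]$ is dominated by its value for $\br(\A)\sim N(0,\A)$, is false (same degenerate example at $x=0$: the left side is near $1$, the right side is $\sqrt{s/(s+q\A)}$), because the integrand is a bump, not convex, so the convex-order comparison of $\br(T)$ with $\br(\A)$ does not apply; Lipschitz concentration of $g(\W_{1})$ would only give some Gaussian decay with different constants. The paper avoids both problems: it applies H\"older jointly in $(s,\omega)$ after splitting $\p(s;y)=\bigl((2\pi s)^{-1/2}\bigr)^{1/p}\bigl(\p(s;\sqrt{q}\,y)\bigr)^{1/q}$, so the first factor is $\ex\bigl[2\sqrt{\A-T}\bigr]^{1/p}\le\mathrm{const}\,(\A-\var{X})^{\frac{1}{2p}}$ with no Chebyshev loss, and it controls the second factor by the optional-sampling estimate \eqref{;prp3}, which rests only on convexity of $y\mapsto(z+y-|x|)^{+}$ and not on \lref{;gd}. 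To repair your argument you would need to redo the H\"older step jointly in $(s,\omega)$, or else settle for a weaker constant than the one asserted.
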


To prove these estimates, we prepare a lemma. 

\begin{lem}\label{;lloc}
 For every $t>0$ and $x\in \R $, we have 
 \begin{align}
  \ex \left[ L^{x}_{t}\right] 
  &=\int _{0}^{t}ds\,\p (s;x) \label{;loc1} \\
  &=2\int _{0}^\infty dy \left( y-|x|\right) ^{+}\p (t;y) \label{;loc2} \\
  &=2\int _{0}^\infty dy \left( \sqrt{t}\,y-|x|\right) ^{+}\p (1;y). 
  \label{;loc3}
 \end{align}
\end{lem}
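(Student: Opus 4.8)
The plan is to prove the three identities for $\ex[L^x_t]$ in turn, starting from the well-known occupation-time/Tanaka representation and then performing two changes of variable.

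First I would establish \eqref{;loc1}. Since $\{\br(t)\}_{t\ge 0}$ is a standard Brownian motion started at $0$, its transition density is $\p(t;\cdot)$, so $\ex[\br(t)\in dy]=\p(t;y)\,dy$. The occupation times formula gives, for any nonnegative Borel $h$,
\begin{align*}
 \int_{\R}h(x)L^x_t\,dx=\int_0^t h(\br(s))\,ds,
\end{align*}
and taking expectations with Tonelli's theorem yields $\int_{\R}h(x)\ex[L^x_t]\,dx=\int_0^t\ex[h(\br(s))]\,ds=\int_{\R}h(x)\bigl(\int_0^t\p(s;x)\,ds\bigr)dx$. Since $h$ is arbitrary nonnegative Borel, $\ex[L^x_t]=\int_0^t\p(s;x)\,ds$ for a.e.\ $x$; continuity in $x$ of both sides (the left side by continuity of local time in the space variable and dominated convergence, the right side by inspection) upgrades this to every $x\in\R$. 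Alternatively, one may simply take expectations in Tanaka's formula $\left(\br(t)-x\right)^{+}=\left(-x\right)^{+}+\int_0^t\ind_{\{\br(s)>x\}}\,d\br(s)+\tfrac12 L^x_t$, using that the stochastic integral is a true martingale (its integrand is bounded), to get $\tfrac12\ex[L^x_t]=\ex\left[\left(\br(t)-x\right)^{+}\right]-\left(-x\right)^{+}$, and then differentiate in $t$ under the integral sign, noting $\frac{\partial}{\partial t}\ex\left[\left(\br(t)-x\right)^{+}\right]=\tfrac12\p(t;x)$ by a direct computation with the Gaussian density.

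Next, for \eqref{;loc2} I would take expectations directly in Tanaka's formula as just indicated: $\tfrac12\ex[L^x_t]=\ex\left[\left(\br(t)-x\right)^{+}\right]-x^{-}$, where I write $x^{-}=(-x)^{+}$. Because $\br(t)\stackrel{(d)}{=}-\br(t)$, one has $\ex\left[\left(\br(t)-x\right)^{+}\right]=\ex\left[\left(\br(t)-|x|\right)^{+}\right]+\bigl(x^{-}-x^{+}\bigr)\cdot\tfrac12$ after a short symmetrization; more cleanly, adding the two Tanaka formulas \eqref{;tf1}--\eqref{;tf2} style identities at level $x$ and $-x$ and using symmetry reduces everything to the case $x\ge 0$, for which $x^{-}=0$ and $\ex[L^x_t]=2\ex\left[\left(\br(t)-x\right)^{+}\right]=2\int_0^\infty(y-x)^{+}\p(t;y)\,dy$ since $\p(t;\cdot)$ is even and supported symmetrically. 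Replacing $x$ by $|x|$ throughout gives \eqref{;loc2} for all $x\in\R$.

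Finally, \eqref{;loc3} is just the Brownian scaling $\br(t)\stackrel{(d)}{=}\sqrt{t}\,\br(1)$ applied inside \eqref{;loc2}: substituting $\p(t;y)=t^{-1/2}\p(1;y/\sqrt{t})$ and then changing variables $y\mapsto\sqrt{t}\,y$ in the integral turns $2\int_0^\infty(y-|x|)^{+}\p(t;y)\,dy$ into $2\int_0^\infty(\sqrt{t}\,y-|x|)^{+}\p(1;y)\,dy$. I expect no real obstacle here; the only points requiring a little care are the justification that the stochastic integral in Tanaka's formula is a genuine martingale (immediate, as $\ind_{\{\br(s)>x\}}$ is bounded) and the passage from "for a.e.\ $x$" to "for every $x$" in \eqref{;loc1}, which follows from joint continuity of $(t,x)\mapsto L^x_t$. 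The Gaussian computation $\frac{\partial}{\partial t}\int_x^\infty(y-x)\p(t;y)\,dy=\tfrac12\p(t;x)$, if one goes the differentiation route, is the one routine calculation and can be checked by differentiating under the integral and integrating by parts once.
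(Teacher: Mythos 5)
Your proof is correct, and for the middle identity \eqref{;loc2} it takes a genuinely different route from the paper. The paper obtains \eqref{;loc1} from the occupation time formula, just as you do, and \eqref{;loc3} by the same change of variables/scaling; but for \eqref{;loc2} it invokes L\'evy's theorem, namely the identity in law $\{ L^{x}_{t}\} _{t\ge 0}\stackrel{(d)}{=}\bigl\{ \bigl( \max _{0\le s\le t}\br (s)-|x|\bigr) ^{+}\bigr\} _{t\ge 0}$, together with the reflection-principle density $2\p (t;y)\,dy$ of the running maximum, whereas you take expectations in Tanaka's formula and use the symmetry $\ex [L^{x}_{t}]=\ex [L^{|x|}_{t}]$ to reduce to $x\ge 0$, where $\tfrac12 \ex [L^{x}_{t}]=\ex \left[ (\br (t)-x)^{+}\right] $. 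Your route is more elementary (no L\'evy identity needed) and sits naturally beside the Tanaka-formula manipulations the paper already performs in \eqref{;tf1}--\eqref{;tf2}; the paper's route is shorter once L\'evy's theorem is granted and yields a process-level identity rather than just an identity of expectations. One small blemish in your write-up: the intermediate ``symmetrization'' display $\ex \left[ (\br (t)-x)^{+}\right] =\ex \left[ (\br (t)-|x|)^{+}\right] +\bigl( x^{-}-x^{+}\bigr) \cdot \tfrac12$ is not correct as written; the correct relation is $\ex \left[ (\br (t)-x)^{+}\right] -(-x)^{+}=\ex \left[ (\br (t)-|x|)^{+}\right] $, which is exactly what your cleaner reduction to $x\ge 0$ amounts to, so the final argument is unaffected since you discard that display. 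Likewise, the restriction of the integral to $(0,\infty )$ holds simply because $(y-|x|)^{+}=0$ for $y\le 0$, not because of any symmetry of $\p (t;\cdot )$.
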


\begin{proof}
 The first equality is seen from the occupation time formula. The second 
 is due to the identity 
 \begin{align*}
  \left\{ L^{x}_{t}\right\} _{t\ge 0}
  \stackrel{(d)}{=}
  \Bigl\{ \bigl( \max _{0\le s\le t}\br (s)-|x|\bigr) ^{+} \Bigr\} _{t\ge 0}
 \end{align*}
 for every $x\in \R $, which is deduced from L\'evy's theorem 
 for Brownian local time. The third one follows from change of variables. 
\end{proof}

The proof of the proposition then proceeds as follows. Recall 
$T\le \A $ a.s. 
\begin{proof}[Proof of \pref{;estloc}]
\thetag{1}~By the strong Markov property of Brownian motion, 
\begin{align}\label{;prp1}
 \ex \left[ L^{x}_{\A }-L^{x}_{T}\right] 
 =\ex \left[ 
 \ex \left[ L^{x-z}_{\A -t}\right] \Big| _{(t,z)=(T,\br (T))}
 \right] . 
\end{align}
By \eqref{;loc3}, this is rewritten as 
\begin{align}\label{;prp2}
 2\ex \left[ 
 \int _{0}^\infty dy 
 \left( 
 \sqrt{\A -T}\,y-|x-\br (T)|
 \right) ^{+}\p (1;y)
 \right] . 
\end{align}
Using Fubini's theorem and Jensen's inequality, we bound this from below 
by 
\begin{align*}
 2\int _{0}^\infty dy \left( 
 \ex 
 \Bigl[ 
 \sqrt{\A -T}
 \Bigr] 
 y-\ex \left[ 
 |x-\br (T)|
 \right] 
 \right) ^{+}\p (1;y). 
\end{align*}
By the optional sampling theorem and Schwarz's inequality, 
\begin{align*}
 \ex \left[ 
 |x-\br (T)|
 \right] &\le \ex \left[ |x-\br (\A )|\right] \\
 &\le \sqrt{x^2+\A }. 
\end{align*}
Plugging this and using the identity between \eqref{;loc3} and 
\eqref{;loc1} lead to 
\begin{align*}
 \ex \left[ L^{x}_{\A }-L^{x}_{T}\right] \ge 
 \int _{0}^{\ex \left[ \sqrt{\A -T}\right] ^{2}}
 \!\!ds\,\p \left( s;\sqrt{x^2+\A }\right) . 
\end{align*}
Since $\sqrt{\A -t}\ge \A ^{-1/2}(\A -t)$ for $0\le t\le \A $, 
we see that 
\begin{align*}
 \ex \left[ \sqrt{\A -T}\right] ^{2}
 &\ge \A ^{-1}\left( \A -\ex [T]\right) ^{2}\\
 &=\A ^{-1}\left( \A -\var{X}\right) ^{2}, 
\end{align*}
where the equality follows from Wald's identity 
\begin{align}\label{;wald}
 \ex [T]=\ex \left[ \br (T)^{2}\right] 
\end{align}
and from \eqref{;inlaw}. This proves \eqref{;est1}. \\
\thetag{2}~First we show that for every $t>0$ and $x\in \R $, 
\begin{align}\label{;prp3}
 \ex \left[ 
 \int _{0}^{t}ds\,\p \left( s; |x-\br (T)|\right) 
 \right] 
 \le \int _{0}^{\A +t}ds\,\p (s;x). 
\end{align}
By the identity between \eqref{;loc1} and \eqref{;loc2}, and by Fubini's 
theorem, the left-hand side is equal to 
\begin{align}\label{;prp4}
 2\int _{0}^\infty dy\,\ex \left[ \left( y-|x-\br (T)|\right) ^{+}\right] 
 \p (t;y). 
\end{align}
We note the identity 
$
\left( y-|x-z|\right) ^{+}=
(z-x+y)^{+}\wedge (x+y-z)^{+}
$ 
for $z\in \R $, to bound the expectation in the integrand from above by 
\begin{align*}
 &\ex \left[ 
 \left( 
 \br (T)-x+y
 \right) ^{+}
 \right] 
 \wedge 
 \ex \left[ 
 \left( 
 x+y-\br (T)
 \right) ^{+}
 \right] \\
 &\le \ex \left[ 
 \left( 
 \br (\A )-x+y
 \right) ^{+}
 \right] 
 \wedge 
 \ex \left[ 
 \left( 
 x+y-\br (\A )
 \right) ^{+}
 \right] \\
 &=
 \ex \left[ 
 \left( 
 \br (\A )+y-|x|
 \right) ^{+}
 \right] . 
\end{align*}
Here for the inequality, we used the optional sampling theorem; 
the equality follows from the monotonicity of 
$\ex \left[ \left( \br (\A )-x+y\right) ^{+}\right] $ in $x$ and 
the symmetry in the sense that 
$
\ex \left[ \left( \br (\A )-(-x)+y\right) ^{+}\right] 
=\ex \left[ \left( x+y-\br (\A )\right) ^{+}\right] 
$. 
Therefore \eqref{;prp4} is dominated by 
\begin{align*}
 &2\int _{0}^{\infty }dy\int _{\R }dz \left( z+y-|x|\right) ^{+}
 \p (\A ;z)\p (t;y)\\
 &= 2\int _{\R }du \left( \sqrt{\A +t}\,u-|x|\right) ^{+}
 \p (1;u)\int _{-\infty }^{\sqrt{\A ^{-1}t}\,u}dv\,\p (1;v)\\
 &\le 2\int _{0}^{\infty }du \left( \sqrt{\A +t}\,u-|x|\right) ^{+}
 \p (1;u), 
\end{align*}
where we changed variables with 
$u=\frac{z+y}{\sqrt{\A +t}}$ and $v=\frac{tz-\A y}{\sqrt{\A t(\A +t)}}$ 
for the equality. Now \eqref{;prp3} follows from the identity between 
\eqref{;loc3} and \eqref{;loc1}. 

By \eqref{;prp1}, \eqref{;loc1} and H\"older's inequality, 
\begin{align*}
 \ex \left[ L^{x}_{\A }-L^{x}_{T}\right] 
 &\le \left( \frac{1}{2\pi }\right) ^{\frac{1}{2p}}
 \ex \left[ \int _{0}^{\A -T}\frac{ds}{\sqrt{s}}\right] ^{\frac{1}{p}}
 \ex \left[ 
 \int _{0}^{\A }ds\,\p \left( s;\sqrt{q}|x-\br (T)|\right) 
 \right] ^{\frac{1}{q}}\\
 &=\left( \frac{2}{\pi }\right) ^{\frac{1}{2p}}
 q^{\frac{1}{2q}}\ex \left[ \sqrt{\A -T}\right] ^{\frac{1}{p}}
 \ex \left[ 
 \int _{0}^{\A q^{-1}}ds\,\p \left( s;|x-\br (T)|\right) 
 \right] ^{\frac{1}{q}}. 
\end{align*}
By Jensen's inequality, \eqref{;wald} and \eqref{;inlaw}, 
\begin{align*}
 \ex \left[ \sqrt{\A -T}\right] \le 
 \left( \A -\ex [T]\right) ^{\frac{1}{2}}
 =\left( \A -\var{X}\right) ^{\frac{1}{2}}. 
\end{align*}
Moreover, by \eqref{;prp3}, 
\begin{align*}
 \ex \left[ 
 \int _{0}^{\A q^{-1}}ds\,\p \left( s;|x-\br (T)|\right) 
 \right] &\le 
 \int _{0}^{\A (1+q^{-1})}ds\,\p (s;x)\\
 &\le \sqrt{\frac{2\A (1+q^{-1})}{\pi }}
 \exp \left\{ 
 -\frac{qx^{2}}{2\A (1+q)}
 \right\} . 
\end{align*}
Combining these leads to \eqref{;est2} and ends the proof of \pref{;estloc}. 
\end{proof}

\begin{proof}[Proof of \eqref{;BL2} and \eqref{;BL3}]
They are immediate from \eqref{;tf3} and \pref{;estloc}. 
\end{proof}

\subsection{Proof of \lref{;gd}}\label{;sspr3}
We conclude this section with the proof of \lref{;gd}; 
the assertion itself is nothing but that of \cite[Theorem~11]{ca}. 
Here we give a different proof. To begin with, note that we only need to 
to consider the case $\A =1$; indeed, setting 
\begin{align*}
 \tV (x):=\V \left( \sqrt{\A }x\right) , && 
 \tfm (x):=\frac{\sqrt{\A }}{Z}\int _{-\infty }^{x}
 \exp \left( 
 -\frac{1}{2}y^2-\tV (y)
 \right) dy, 
\end{align*}
we have $\fm (x)=\tfm \left( x/\sqrt{\A }\right) $, from which it follows 
that 
\begin{align*}
 \fm ^{-1}\circ \Phi (x)=\sqrt{\A }\tfm ^{-1}\circ \Phi (x). 
\end{align*}
Therefore the assertion of \lref{;gd} is equivalent to 
\begin{align*}
 \left( \tfm ^{-1}\circ \Phi \right) '\le 1. 
\end{align*}
Note that $\tV $ remains convex. 

From now on we let $\A =1$.  We start with 
\begin{lem}\label{;fd}
It holds that, for all $x\in \R $, 
\begin{align*}
 \frac{\fm '}{\fm }(x)\ge \frac{\Phi '}{\Phi }\left( x+\V '_{-}(x)\right) , 
 \qquad \fm '(x)\ge \Phi '\left( x+\V '_{-}(x)\right) . 
\end{align*}
These also hold true with $\V '_{-}$ replaced by $\V '_{+}$. 
\end{lem}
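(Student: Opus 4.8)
The plan is a short computation resting on the subgradient inequality for the convex $\V$ together with completing the square in a Gaussian integral. Since $\V$ is convex and finite, it is continuous, so $\fm$ is continuously differentiable with
\[
\fm'(x) = \frac{e^{-x^2/2 - \V(x)}}{Z\sqrt{2\pi}}, \qquad \fm(x) = \frac{1}{Z\sqrt{2\pi}}\int_{-\infty}^{x} e^{-y^2/2 - \V(y)}\,dy,
\]
while $\Phi'(z) = (\sqrt{2\pi})^{-1}e^{-z^2/2}$. Fix $x \in \R$ and let $p$ denote either $\V'_{-}(x)$ or $\V'_{+}(x)$; by monotonicity of the one-sided derivatives, $p \in [\V'_{-}(x), \V'_{+}(x)]$ is a subgradient of $\V$ at $x$, so $\V(y) \ge \V(x) + p(y-x)$ for every $y \in \R$. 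I will also use the elementary identity, obtained by completing the square,
\[
\int_{-\infty}^{c} e^{-y^2/2 - py}\,dy = \sqrt{2\pi}\,e^{p^2/2}\,\Phi(c+p), \qquad c \in \R \cup \{+\infty\}.
\]

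For the first inequality I would bound, using the subgradient inequality on $(-\infty, x]$ and the identity with $c = x$,
\[
\int_{-\infty}^{x} e^{-y^2/2 - \V(y)}\,dy \le e^{-\V(x) + px}\int_{-\infty}^{x} e^{-y^2/2 - py}\,dy = \sqrt{2\pi}\,e^{-\V(x) + px + p^2/2}\,\Phi(x+p).
\]
Dividing $\fm'(x)$ by this upper bound on $Z\sqrt{2\pi}\,\fm(x)$, the factors $Z^{-1}$ and $e^{-\V(x)}$ cancel and one obtains
\[
\frac{\fm'}{\fm}(x) \ge \frac{e^{-x^2/2 - px - p^2/2}}{\sqrt{2\pi}\,\Phi(x+p)} = \frac{e^{-(x+p)^2/2}}{\sqrt{2\pi}\,\Phi(x+p)} = \frac{\Phi'}{\Phi}(x+p),
\]
which is the asserted bound for $p = \V'_{-}(x)$, and likewise for $p = \V'_{+}(x)$.

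For the second inequality I would run the same estimate over all of $\R$, i.e.\ take $c = +\infty$ in the identity: since $p$ is a genuine subgradient,
\[
Z = \frac{1}{\sqrt{2\pi}}\int_{\R} e^{-y^2/2 - \V(y)}\,dy \le e^{-\V(x) + px + p^2/2},
\]
and therefore
\[
\fm'(x) = \frac{e^{-x^2/2 - \V(x)}}{Z\sqrt{2\pi}} \ge \frac{e^{-x^2/2 - \V(x)}}{\sqrt{2\pi}\,e^{-\V(x) + px + p^2/2}} = \frac{e^{-(x+p)^2/2}}{\sqrt{2\pi}} = \Phi'(x+p),
\]
again for both $p = \V'_{\pm}(x)$. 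I do not expect a genuine obstacle: the argument is essentially computational. The only points that need a moment's care are that the tangent-line inequality $\V(y) \ge \V(x) + p(y-x)$ is used over all $y \in \R$ in the bound for $Z$ (so one really does need $p$ to be a subgradient, which holds for $\V'_{-}(x)$ and $\V'_{+}(x)$ alike), and that $\fm$ is strictly positive so that the quotient in the first step is well defined, which is immediate since the density of $\mu$ is everywhere positive.
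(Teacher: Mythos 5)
Your proof is correct and is essentially the paper's own argument: both rest on the tangent-line (subgradient) inequality $\V(y)\ge\V(x)+\V'_{\pm}(x)(y-x)$ followed by completing the square in the Gaussian integral, the paper phrasing the first bound via $\fm/\fm'$ and dismissing the second as "proved similarly," which is exactly the bound on $Z$ you spell out. No gaps.
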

\begin{proof}
Since 
$
 \V (y)-\V (x)\ge \V '_{-}(x)(y-x)
$ 
for all $x,y\in \R $, we have 
\begin{align*}
 \frac{\fm }{\fm '}(x)&=\int _{-\infty }^{x}
 \exp \left( -\frac{1}{2}y^2-\V (y)\right) dy\times \exp 
 \left( \frac{1}{2}x^2+\V (x)\right) \\
 &\le \exp 
 \left( \frac{1}{2}x^2\right) \int _{-\infty }^{x}\exp \left\{ 
 -\frac{1}{2}y^2-\V '_{-}(x)(y-x)
 \right\} dy\\
 &=\exp \left\{ 
 \frac{1}{2}\left( 
 x+\V '_{-}(x)
 \right) ^2
 \right\} 
 \int _{-\infty }^{x+\V '_{-}(x)}\exp \left( -\frac{1}{2}y^2\right) dy, 
\end{align*}
which is nothing but the first inequality. The latter is proved similarly. 
\end{proof}

We also utilize the following: 
\begin{lem}\label{;nd}
The function $\pdpinv (\xi ),\xi \in (0,1)$, is concave and symmetric with 
respect to $\xi =1/2$, and satisfies 
$\pdpinv (0+)=\pdpinv (1-)=0$. Here $\Phi ^{-1}$ is the inverse function of 
$\Phi $. 
\end{lem}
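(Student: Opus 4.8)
The plan is to establish all three assertions by direct differentiation, exploiting that $\Phi $ is smooth and strictly increasing---so that $\Phi ^{-1}$ is smooth on $(0,1)$ with $\left( \Phi ^{-1}\right) '(\xi )=1/\Phi '\left( \Phi ^{-1}(\xi )\right) $---together with the elementary identities $\Phi ''(x)=-x\,\Phi '(x)$ and $\Phi (-x)=1-\Phi (x)$ for $x\in \R $.

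For the symmetry, the reflection identity $\Phi (-x)=1-\Phi (x)$ gives $\Phi ^{-1}(1-\xi )=-\Phi ^{-1}(\xi )$; since $\Phi '$ is an even function, this yields $\pdpinv (1-\xi )=\pdpinv (\xi )$, i.e.\ symmetry with respect to $\xi =1/2$. For the boundary behavior, $\Phi ^{-1}(\xi )\to -\infty $ as $\xi \downarrow 0$ while $\Phi '(x)\to 0$ as $x\to -\infty $, whence $\pdpinv (0+)=0$, and the symmetry just proved gives $\pdpinv (1-)=0$ as well.

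It remains to prove concavity, which is the only step with any content. Writing $x=\Phi ^{-1}(\xi )$ and applying the chain rule together with $\Phi ''=-x\,\Phi '$, I find that the first derivative of $\pdpinv (\xi )$ in $\xi $ equals $\Phi ''(x)/\Phi '(x)=-x=-\Phi ^{-1}(\xi )$. Differentiating once more gives the second derivative $-\left( \Phi ^{-1}\right) '(\xi )=-1/\Phi '\left( \Phi ^{-1}(\xi )\right) =-1/\pdpinv (\xi )$, which is strictly negative on $(0,1)$ because $\pdpinv >0$ there; hence $\pdpinv $ is (strictly) concave on $(0,1)$. I do not anticipate a genuine obstacle: the computation is short, and the only point needing (routine) care is the smoothness of $\Phi ^{-1}$ that legitimizes the two differentiations. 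Note also that the intermediate formula $\left( \pdpinv \right) '(\xi )=-\Phi ^{-1}(\xi )$ may itself be convenient in later arguments.
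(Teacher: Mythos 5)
Your proof is correct and follows essentially the same route as the paper: the key step in both is the identity $\left( \pdpinv \right) '=-\Phi ^{-1}$, from which concavity is immediate (the paper infers it from monotonicity of $\Phi ^{-1}$, you from a second differentiation), while the symmetry and boundary limits you spell out are the parts the paper dismisses as obvious.
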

\begin{proof}
A simple calculation shows 
\begin{align*}
\left\{ 
\pdpinv 
\right\} '=-\Phi ^{-1}. 
\end{align*}
Since $\Phi ^{-1}:(0,1)\to \R $ is increasing, the concavity follows. The 
symmetry and values at boundary are obvious. 
\end{proof}

Using the above two lemmas, we prove \lref{;gd}. 
\begin{proof}[Proof of \lref{;gd}]
 Since 
 \begin{align*}
  g'(x)=\frac{\Phi '(x)}{\fdfinv \left( \Phi (x)\right) }, 
 \end{align*}
 the assertion of the lemma with $\A =1$ is equivalent to 
 \begin{align}\label{;posi}
  \G (\xi ):=\fdfinv (\xi )-\pdpinv (\xi )\ge 0 \quad 
  \text{for all }\xi \in (0,1). 
 \end{align}
 First we show that there exists $0<\delta <1$ such that both 
 \begin{align}\label{;bdry}
  \inf _{\xi \in (0,\delta ]}\G (\xi )\ge 0 \quad \text{and} \quad 
  \inf _{\xi \in [1-\delta ,1)}\G (\xi )\ge 0 
 \end{align}
 hold. Set 
 \begin{align*}
  \b (x):=\frac{\fm '(x)}{\Phi '\left( x+\V '_{-}(x)\right) }, 
  \quad x\in \R . 
 \end{align*}
 By \lref{;fd}, we have $\b (x)\ge 1$ and 
 \begin{align}\label{;order}
  \frac{\fm (x)}{\b (x)}\le \Phi \left( x+\V '_{-}(x)\right) 
 \end{align}
 for all $x\in \R $. We take $\xi \in (0,1)$ sufficiently small so that 
 \begin{align*}
  \Phi \left( x+\V '_{-}(x)\right) \big| _{x=\fm ^{-1}(\xi )}\le \frac{1}{2}. 
 \end{align*}
 Since $\pdpinv $ is increasing on $(0,1/2]$ as seen from \lref{;nd}, 
 it then holds that 
 \begin{align*}
  \Phi '\left( x+\V '_{-}(x)\right) \big| _{x=\fm ^{-1}(\xi )}
  &=\pdpinv \left( \Phi \left( x+\V '_{-}(x)\right) \right) 
  \big| _{x=\fm ^{-1}(\xi )}\\
  &\ge \pdpinv \left( \frac{\xi }{\bfinv (\xi )}\right) 
 \end{align*}
 by \eqref{;order}. Therefore, for $\xi $ sufficiently small, 
 \begin{align*}
  \G (\xi )&=\left\{ \b (x)\Phi '\left( x+\V '_{-}(x)\right) \right\} 
  \big| _{x=\fm ^{-1}(\xi )}
  -\pdpinv (\xi )\\
  &\ge \bfinv (\xi )\pdpinv \left( \frac{\xi }{\bfinv (\xi )}\right) 
  -\pdpinv (\xi ), 
 \end{align*}
 which is nonnegative since for every fixed $c\ge 1$, we have 
 \begin{align*}
  c\pdpinv \left( \frac{\eta }{c}\right) -\pdpinv (\eta )\ge 0 
  \quad \text{for all }\eta \in (0,1)
 \end{align*}
 by \lref{;nd}. We thus obtain the former inequality in \eqref{;bdry}. 
 By considering $\V (-x)$
 and using the symmetry of $\pdpinv $, 
 we also have the latter. 
 Note that $\G $ is both right- and left-differentiable since 
 $\fm '$ is and since $\fm ^{-1}$ is monotone. Suppose now that 
 $\G $ has a local minimum at some $\xi _{0}\in (0,1)$. Then 
 $\G '_{-}(\xi _{0})\le 0$ and $\G '_{+}(\xi _{0})\ge 0$. 
 Since 
 \begin{align*}
  \G '_{\pm }(\xi )
  &=\frac{\left(\fm '\right) '_{\pm}}{\fm '}\circ \fm ^{-1}(\xi )
  +\Phi ^{-1}(\xi )\\
  &=-\left( x+\V '_{\pm }(x)\right) \big| _{x=\fm ^{-1}(\xi )}
  +\Phi ^{-1}(\xi ), 
 \end{align*}
 we have 
 \begin{align*}
  \left( x+\V '_{+}(x)\right) \big| _{x=\fm ^{-1}(\xi _{0})}\le 
  \Phi ^{-1}(\xi _{0})
  \le \left( x+\V '_{-}(x)\right) \big| _{x=\fm ^{-1}(\xi _{0})}, 
 \end{align*}
 from which it follows that 
 \begin{align*}
  \Phi ^{-1}(\xi _{0})
  =\left( x+\V '_{-}(x)\right) \big| _{x=\fm ^{-1}(\xi _{0})}. 
 \end{align*}
 Hence by \lref{;fd} 
 \begin{align*}
  \G (\xi _{0})&=\left\{ 
  \fm '(x)-\Phi '\left( x+\V '_{-}(x)\right) 
  \right\} \big| _{x=\fm ^{-1}(\xi _{0})}\ge 0 . 
 \end{align*}
 Combining this observation with \eqref{;bdry}, we conclude 
 \eqref{;posi}. This completes the proof. 
\end{proof}

\appendix 
\section*{Appendix}
\renewcommand{\thesection}{A}
\setcounter{equation}{0}
\setcounter{lem}{0}
\setcounter{rem}{0}
In this appendix we discuss an extension of the Brascamp-Lieb inequality 
\eqref{;BL1} to the case with potential function $\V $ not necessarily 
convex. To avoid complexity, we restrict ourselves to one-dimension; 
generalizations to multidimension may be done by considering 
one-dimensional marginals. Recently, gradient interface models 
with nonconvex potential have been studied with great interest, see, e.g., 
\cite{bk,cdm,bs}; we expect that the result presented here 
has a contribution to that study. A type of Brascamp-Lieb 
inequalities with nonconvex potential 
is also discussed by Funaki and Toukairin \cite[Section~4]{ft} with some 
restriction on convex $\psi $. 

For a given $\al >0$, suppose that the function $\k \in C^{1}(\R )$ satisfies 
\begin{align}\label{;assumk}
 k'(x)\ge \sqrt{\al } \quad \text{for all }x\in \R . 
\end{align}
Set 
\begin{align}\label{;defU}
 \U (x)=\frac{1}{2}\left| \k (x)\right| ^2-\log \k '(x), \quad x\in \R , 
\end{align}
and let the distribution $\mu $ on $\R $ be give in the form 
\begin{align*}
 \mu (dx)=\frac{1}{Z'}e^{-\U (x)}\,dx, 
\end{align*}
where the normalizing factor $Z'=\int _{\R }e^{-\U (x)}\,dx$ is 
equal to $\sqrt{2\pi }$. Let $\X $ be a random variable 
distributed as $\mu $, and $\Y $ a centered Gaussian random variable 
with variance $1/\al $. Under the above assumption, we have 

\begin{prop}\label{;pext}
For every convex function $\psi $ on $\R $, it holds that 
\begin{align}\label{;upper}
 \ex \left[ 
 \psi \left( 
 \X -\ex [\X ]
 \right) 
 \right] 
 \le \ex \left[ 
 \psi \left( 
 \Y 
 \right) 
 \right] . 
\end{align}
\end{prop}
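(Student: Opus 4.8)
The plan is to run the Skorokhod‐embedding argument of \ssref{;sspr1}, but with the monotone transport map $\fm ^{-1}\circ \Phi $ of \eqref{;defg} replaced by the \emph{explicitly prescribed} map $\k ^{-1}$, whose Lipschitz bound is handed to us directly by \eqref{;assumk} rather than having to be extracted, as in \lref{;gd}, from convexity of a potential.

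First I would record two elementary facts. Since $\k \in C^{1}(\R )$ with $\k '\ge \sqrt{\al }>0$ by \eqref{;assumk}, the map $\k $ is a strictly increasing $C^{1}$-bijection of $\R $ onto itself; hence $g:=\k ^{-1}$ is well defined, lies in $C^{1}(\R )$, and satisfies
\begin{align*}
 g'(y)=\frac{1}{\k '\bigl( \k ^{-1}(y)\bigr) }\le \frac{1}{\sqrt{\al }}=\sqrt{\var{\Y }}\qquad \text{for all }y\in \R .
\end{align*}
Secondly, by the change of variable $x=\k ^{-1}(y)$ and the definition \eqref{;defU} of $\U $, the push-forward of the standard Gaussian law under $\k ^{-1}$ has density
\begin{align*}
 \frac{1}{\sqrt{2\pi }}\exp \left( -\frac{1}{2}\k (x)^{2}\right) \k '(x)=\frac{1}{\sqrt{2\pi }}e^{-\U (x)},
\end{align*}
which is precisely $\mu (dx)$; that is, $\k ^{-1}$ is nothing but the increasing transport map pushing the standard Gaussian onto $\mu $. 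Moreover $|\k ^{-1}(y)|\le |y-\k (0)|/\sqrt{\al }$ by \eqref{;assumk}, so $\X $, and hence $\ex [\X ]$, is integrable.

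With $g:=\k ^{-1}$ in hand, the proof of \eqref{;BL1} carries over with only cosmetic changes, $\A $ being replaced throughout by $\var{\Y }=1/\al $ and the bound $g'\le \sqrt{\var{\Y }}$ being used in place of \lref{;gd}. Concretely: letting $\W $ be a standard Brownian motion, Clark's formula gives $g(\W _{1})-\ex [g(\W _{1})]=\int _{0}^{1}a(s,\W _{s})\,d\W _{s}$ with $a(s,y)=\ex [g'(y+\W _{1-s})]\le \sqrt{\var{\Y }}$; the Dambis--Dubins--Schwarz theorem and \cite{ba} yield a Brownian motion $\br $ together with a stopping time $T=\int _{0}^{1}a(s,\W _{s})^{2}\,ds\le \var{\Y }$ such that $\br (T)=g(\W _{1})-\ex [g(\W _{1})]\stackrel{(d)}{=}\X -\ex [\X ]$; and Tanaka's formula, combined with $\psi ''\ge 0$ and $T\le \var{\Y }$, gives $\ex [\psi (\br (\var{\Y }))]\ge \ex [\psi (\br (T))]$, which is exactly \eqref{;upper} since $\br (\var{\Y })\stackrel{(d)}{=}\Y $. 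As in \tref{;tBL}, the inequality is to be read as holding also when both sides are infinite, which once more follows from the identity \eqref{;tf3} with $\A $ replaced by $\var{\Y }$.

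The one point that genuinely needs checking — and what I would call the main obstacle — is to confirm that convexity of the potential entered \ssref{;sspr1} \emph{only} through \lref{;gd}. Every other ingredient (Clark's formula, Dambis--Dubins--Schwarz, the fact from \cite{ba} that $T$ is a stopping time in the natural filtration of $\br $, and Tanaka's formula) is insensitive to the shape of $\mu $; and here the required Lipschitz bound $g'=(\k ^{-1})'\le \sqrt{\var{\Y }}$ is immediate from the standing hypothesis \eqref{;assumk}, with no convexity, indeed no regularity beyond $\k \in C^{1}$, being used. Put differently, formulas \eqref{;defU}--\eqref{;assumk} are tailored so that $\k ^{-1}$ is simultaneously the transport map from the standard Gaussian onto $\mu $ and a $\sqrt{1/\al }$-Lipschitz function, which is all that the argument of \ssref{;sspr1} ever asked of $\fm ^{-1}\circ \Phi $.
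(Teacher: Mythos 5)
Your proof is correct and follows essentially the same route as the paper: you identify $F_{\mu}=\Phi \circ \k$ so that the map $g$ of \eqref{;defg} is $\k ^{-1}$, deduce $g'\le 1/\sqrt{\al }$ directly from \eqref{;assumk} in place of \lref{;gd}, and then rerun the embedding argument of \ssref{;sspr1} with $\A $ replaced by $1/\al $. Your extra observations (the push-forward computation and that convexity entered \ssref{;sspr1} only through \lref{;gd}) simply make explicit what the paper leaves implicit.
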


\begin{proof}
 Since the distribution function $\fm $ of $\mu $ is written as 
 \begin{align*}
  \fm (x)&=\int _{-\infty }^{x}\frac{1}{\sqrt{2\pi }}\k '(y)
  \exp \left\{ 
  -\frac{1}{2}\left| \k (y)\right| ^2
  \right\} dy\\
  &=\Phi \left( \k (x)\right) , 
 \end{align*}
 the function $g $ defined by \eqref{;defg} is equal to $\k ^{-1}$, the 
 inverse function of $\k $. Therefore by assumption \eqref{;assumk}, we 
 have $g '(x)\le 1/\sqrt{\al }$ for all $x\in \R $, hence the same proof 
 as that of \eqref{;BL1} applies. 
\end{proof}

\begin{rem}\label{;reverse}
\thetag{1}~\lref{;gd} indicates that, by suitably adding a 
constant, the function of the form 
\begin{align*}
 \frac{1}{2}\al x^2+\V (x), \quad x\in \R , 
\end{align*}
with $\V $ convex can be expressed as \eqref{;defU} for some 
$\k $ satisfying \eqref{;assumk}. \\
\thetag{2}~In addition to \eqref{;assumk}, if we assume that 
\begin{align*}
 \k '(x)\le \sqrt{\beta } \quad \text{for all }x\in \R , 
\end{align*}
for some $\beta > \al $, then we also have 
the reverse inequality 
\begin{align}\label{;lower}
 \ex \left[ \psi (\Y ')\right] \le 
 \ex \left[ 
 \psi \left( \X -\ex [\X ]\right) 
 \right] 
\end{align}
for every convex $\psi $. 
Here $\Y '$ is a centered Gaussian random variable with variance 
$1/\beta $. 
\end{rem}

\def\kn {K_{\mathrm{N}}}
\def\kd {K_{\mathrm{D}}}
We conclude this paper with two examples of $\U $. 
\begin{exm}[double-well type] 
Take $\al =1$ and $\k (x)=x+x^3$. Then 
\begin{align*}
 \U (x)=\frac{1}{2}x^2+x^4+\frac{1}{2}x^6-\log \left( 1+3x^2\right) . 
\end{align*}
This potential $\U $ has a double-well near the origin. 
\end{exm}

\begin{exm}[log-mixture of centered Gaussians]
For given $p,q>0$ and $0<a<b$ such that 
\begin{align}\label{;conj}
 \frac{p}{\sqrt{a}}+\frac{q}{\sqrt{b}}=1, 
\end{align}
we take 
\begin{align*}
 \k (x)=\Phi ^{-1}\left( 
 \frac{p}{\sqrt{a}}\Phi \left( \sqrt{a}x\right) 
 +\frac{q}{\sqrt{b}}\Phi \left( \sqrt{b}x\right) 
 \right) . 
\end{align*}
Then the corresponding $\U $ is expressed as 
\begin{align}\label{;forrem}
 \U (x)=-\log \left( pe^{-\frac{1}{2}ax^2}+qe^{-\frac{1}{2}bx^2}\right) . 
\end{align}
This type of potentials is dealt with in \cite{bk,cdm,bs}. The function 
$\k $ satisfies 
\begin{align}\label{;bdk}
 p\le \k '(x)\le \sqrt{b} \quad \text{for all }x\in \R , 
\end{align}
hence we have \eqref{;upper} with $\al =p^2$ and 
\eqref{;lower} with $\beta =b$. To verify \eqref{;bdk}, 
we start with the expression 
\begin{align}\label{;exkd}
 \k '(x)=
 \frac{p\Phi '\left( \sqrt{a}x\right) +q\Phi '\left( \sqrt{b}x\right) }
 {\Phi '\circ \Phi ^{-1}\left( \frac{p}{\sqrt{a}}
 \Phi \left( \sqrt{a}x\right) 
 +\frac{q}{\sqrt{b}}
 \Phi \left( \sqrt{b}x\right) 
 \right) }. 
\end{align}
To prove the lower bound, it is sufficient to take $x\le 0$ by symmetry. Then, 
as 
\begin{align*}
 \frac{p}{\sqrt{a}}
 \Phi \left( \sqrt{a}x\right) 
 +\frac{q}{\sqrt{b}}
 \Phi \left( \sqrt{b}x\right) \le 
 \Phi \left( \sqrt{a}x\right) \le \frac{1}{2}, 
\end{align*}
the denominator of \eqref{;exkd} is dominated by 
\begin{align*}
 \Phi '\circ \Phi ^{-1}\left( 
 \Phi \left( \sqrt{a}x\right) 
 \right) =\Phi '\left( \sqrt{a}x\right) 
\end{align*}
because $\Phi '\circ \Phi ^{-1}$ is increasing on 
$(0,1/2]$ (see \lref{;nd}). Therefore 
\begin{align*}
 \k '(x)\ge p+qe^{-\frac{1}{2}(b-a)x^2} 
\end{align*}
and the lower bound follows. For the upper bound, by concavity of 
$\Phi '\circ \Phi ^{-1}$ (\lref{;nd}) and relation \eqref{;conj}, 
we apply Jensen's inequality to see that 
the denominator of \eqref{;exkd} is bounded from below by 
\begin{align*}
 \frac{p}{\sqrt{a}}\Phi '\circ \Phi ^{-1}
 \left( \Phi \left( \sqrt{a}x\right) \right) 
 +\frac{q}{\sqrt{b}}
 \Phi '\circ \Phi ^{-1}
 \left( \Phi \left( \sqrt{b}x\right) \right) 
 \ge \frac{1}{\sqrt{b}}\left\{ 
 p\Phi '\left( \sqrt{a}x\right) +
 q\Phi '\left( \sqrt{b}x\right) 
 \right\} , 
\end{align*}
from which we obtain the upper bound in \eqref{;bdk}. 
We end this example with a remark that this upper bound 
also holds true in a general situation where $\k $ is given by 
\begin{align*}
 \k (x)=\Phi ^{-1}\left( 
 \int _{0}^\infty \frac{\rho (d\kp )}{\sqrt{\kp }}
 \Phi \left( \sqrt{\kp }x\right) 
 \right) 
\end{align*}
for a positive measure $\rho $ on $(0,\infty )$ 
such that its support is included in $(0,b]$ and 
\begin{align*}
\int _{0}^\infty \frac{\rho (d\kp )}{\sqrt{\kp }}=1. 
\end{align*}
The potential $\U $ corresponding to this $\k $ is given in the form 
\begin{align*}
 \U (x)=-\log \int _{0}^\infty \rho (d\kp )\,e^{-\frac{1}{2}\kp x^2}, 
\end{align*}
which is referred to as a {\it log-mixture of centered Gaussians} 
in \cite{bs}. 
\end{exm}

\begin{rem}
 For $\U $ given by \eqref{;forrem}, a concrete calculation shows that 
 in fact \eqref{;upper} holds with $\al =a$, which gives a better bound 
 than the one discussed above because $p^2\le a$ by the relation 
 \eqref{;conj}. 
\end{rem}


\end{document}